\documentclass[11pt]{amsart}
\usepackage{graphicx} 
\usepackage{hyperref,amsfonts,amsmath,amssymb,amsthm}
\usepackage{amscd,mathrsfs}
\usepackage{mathtools}
\usepackage{comment}
\usepackage{stmaryrd}

\numberwithin{equation}{section}

\theoremstyle{plain}
\newtheorem{theorem}{Theorem}[section]

\newtheorem{lemma}[theorem]{Lemma}

\theoremstyle{definition}
\newtheorem{definition}[theorem]{Definition}

\theoremstyle{remark}

\DeclareMathOperator{\re}{Re}
\DeclareMathOperator{\im}{Im}

\DeclareMathOperator{\spt}{spt}
\DeclareMathOperator{\reg}{reg}
\DeclareMathOperator{\sing}{sing}

\title[Stationary integral currents]
{Stationary integral cycles asymptotic to \\
a pair of oriented $n$-planes in $\mathbb{R}^{2n}$}
\author{Jooho Lee}
\address{School of Mathematics, Korea Institute for Advanced Study\\
                85 Hoegiro, Dongdaemun-gu\\
                Seoul 02455, Republic of Korea}
\email{jooholee@kias.re.kr}

\thanks{2010 {\em Mathematics Subject Classification.} 53A10.}

\begin{document}

\begin{abstract}
    We prove a uniqueness theorem for stationary integral $n$-cycles asymptotic to a pair of $n$-planes satisfying the symmetric angle condition. For the general angle condition, we obtain a classification for complete smooth embedded oriented minimal submanifolds with vanishing first de Rham cohomology that are asymptotic to a pair of $n$-planes.
\end{abstract}

\maketitle

\section{Introduction}
In \cite{HS85}, Hardt and Simon showed that given an area-minimizing hypercone in $\mathbb{R}^{n+1}$, there are complete smooth area-minimizing hypersurfaces asymptotic to the cone.

In codimension $1$, quadratic cones $C^{p,q}$ are the simplest area-minimizing cones with an isolated singularity, 
\begin{equation*}
    C^{p,q}=\{(x,y)\in \mathbb{R}^{p+1}\times \mathbb{R}^{q+1} : q|x|^2=p|y|^2\},
\end{equation*}
where $p,q$ are positive integers and $p+q=n-1$. When $n=7$, the cones $C^{3,3}$ and $C^{2,4}$ are area-minimizing. When $n>7$, every $C^{p,q}$ is area-minimizing. 

In \cite{SS86}, Simon and Solomon proved that the only minimal hypersurfaces asymptotic to an area-minimizing quadratic cone are either the cone or a leaf of foliation up to similarities of $\mathbb{R}^{n+1}$.

One can ask the following questions:

\textit{
Given an area-minimizing cone $C$ with an isolated singularity at $0$, suppose that a stationary integral current $T$ is asymptotic to $C$ near infinity. What can we say about uniqueness and regularity of $T$?
}

Let $P$ and $Q$ be oriented $n$-planes in $\mathbb{R}^{2n}$. Then, there exists an oriented orthonormal basis $e_1, \cdots, e_{2n}$ for $\mathbb{R}^{2n}$ such that 
        \begin{align*}
            P&=e_1 \wedge \cdots \wedge e_n, \\
            Q&=(\cos{\theta_1}e_1 + \sin{\theta_1}e_{n+1}) \wedge \cdots \wedge (\cos{\theta_n}e_n + \sin{\theta_n}e_{2n})
        \end{align*}
        where $0 \le \theta_1 \le \cdots \le \theta_{n-1} \le \pi/2$ and $\theta_{n-1} \le \theta_{n} \le \pi - \theta_{n-1}$. These angles are called the characterizing angles of $P$ and $Q$.

The angle criterion~\cite{Nance87,L89} says that a pair of oriented $n$-planes is area-minimizing if and only if 
\[
    \theta_{n} \le \theta_1 + \cdots +\theta_{n-1}.
\]

For each pair of oriented $n$-planes $P$ and $Q$ satisfying the angle condition
\begin{equation}\label{eq:angle}
    \theta_{n} = \theta_1 + \cdots +\theta_{n-1},
\end{equation} there is a smooth area-minimizing submanifold asymptotic to $P+Q$. This submanifold is known as a Lawlor neck. 

Consider two $n$-planes
\begin{align*}
    P=\{(x,y) \in \mathbb{R}^{2n} : y=0\}, \quad
    Q=\{(x,y) \in \mathbb{R}^{2n} : y=\tan(\pi/n)x\}.
\end{align*}

Then, with respect to the basis 
\begin{align*}
    e_1=\frac{\partial}{\partial x^1}, \cdots, e_n=\frac{\partial}{\partial x^n}, \\
    e_{n+1}=-\frac{\partial}{\partial y^1}, \cdots, e_{2n}=-\frac{\partial}{\partial y^n},
\end{align*}
the characterizing angles are 
\begin{equation}\label{eq:symmetric angle}
    \theta_1 = \cdots = \theta_{n-1} = \frac{\pi}{n}, \quad 
    \theta_{n} = \frac{n-1}{n}\pi.
\end{equation}

In this case, the Lawlor neck asymptotic to $P+Q$ is known as the Lagrangian catenoid and has the following explicit expression:  
\begin{equation*}
    M=\{(x,y) \in \mathbb{R}^{2n} : |y|x=|x|y, \, \operatorname{Im}(|x|+\sqrt{-1}|y|)^n=1, \, |y|<|x|\tan{\frac{\pi}{n}} \}.
\end{equation*}
Moreover, $M$ is invariant under the $SO(n)$-diagonal action on $\mathbb{R}^{2n}$. 

In \cite{IJO2016}, Imagi, Joyce and dos Santos proved a uniqueness theorem for smooth exact special Lagrangian submanifolds asymptotic to a pair of $n$-planes. In \cite{Li-Székelyhidi}, Li and Székelyhidi extended the uniqueness result to Lagrangian integral $n$-cycles under a smooth exact approximation condition.

\subsection{Main results}

We now state the main theorems of this paper. First, we prove a uniqueness theorem for stationary integral $n$-cycles asymptotic to a pair of $n$-planes satisfying the symmetric angle condition \eqref{eq:symmetric angle}:
\begin{theorem}\label{main1}
    For $n \ge 2$, let $P$ and $Q$ be a pair of oriented $n$-planes in $\mathbb{R}^{2n}$ satisfying the symmetric angle condition \eqref{eq:symmetric angle}. If a stationary integral $n$-cycle $T$ is asymptotic to $P+Q$ in the sense of currents and varifolds, then, up to translation and scaling, $T$ is either $P+Q$ or the Lagrangian catenoid. 
\end{theorem}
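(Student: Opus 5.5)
The plan is to reduce the stationary case to the calibrated (special Lagrangian) case and then use the known uniqueness results. Under the symmetric angle condition \eqref{eq:symmetric angle}, $P$ and $Q$ are both special Lagrangian for a common phase, after rotating to a complex structure. Concretely, take $J$ with $Je_k$ related to $e_{n+k}$ by a sign so that $P=\mathbb{R}^n$ and $Q=e^{i\pi/n}\mathbb{R}^n$. Then the $n$-form $\phi=\re(e^{-i\alpha}dz^1\wedge\cdots\wedge dz^n)$, for a suitable phase $\alpha$, satisfies $\phi(\vec P)=\phi(\vec Q)=1$ with the given orientations. The first step is to verify this calibration identity directly from the characterizing angles. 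The sum $\theta_1+\cdots+\theta_{n-1}-\theta_n$ vanishes, which is exactly what makes the phases agree once the orientation of $Q$ is accounted for by the sign flip in $e_{n+1},\dots,e_{2n}$.

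The second step, which I expect to be the main obstacle, is to show that $T$ is calibrated by $\phi$, i.e.\ $\|T\|=T\llcorner\phi$ as measures. Since $\phi$ has comass one, $\langle \vec T,\phi\rangle\le 1$ pointwise. I would use monotonicity together with the asymptotic condition to compare the two quantities on large balls. Stationarity gives the monotone density ratio $\Theta(T,0,r)=\|T\|(B_r)/(\omega_n r^n)$, which increases to the density at infinity. Varifold convergence of the blow-downs to $P+Q$ forces that limit to be $2$. On the other hand, $\phi=d\psi$ with $\psi$ homogeneous of degree $n$ (e.g.\ $\psi=\frac1n\iota_{r\partial_r}\phi$), so $T(\phi\llcorner B_r)=\partial(T\llcorner B_r)(\psi)$. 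Current convergence of the rescalings $\eta_{0,r\#}T\to P+Q$, together with the slicing lemma and a good choice of radii, shows that this boundary term divided by $r^n$ tends to the corresponding quantity for $P+Q$, namely $2\omega_n$. This gives $\lim_r r^{-n}\int_{B_r}(1-\langle\vec T,\phi\rangle)\,d\|T\|=0$. The delicate point is how to turn this into the pointwise statement $\langle\vec T,\phi\rangle=1$ a.e. The trick I would try is to apply the same computation centered at an arbitrary point $p$. The density at infinity is still $2$ and the boundary term is translation-invariant in the limit. Then for every $p$ and $r$,
\[
\|T\|(B_r(p))-T\llcorner\phi(B_r(p))\le \lim_{s\to\infty}\bigl(\|T\|(B_s(p))-T\llcorner\phi(B_s(p))\bigr)\cdot\Bigl(\text{monotone comparison}\Bigr).
\]
The cleanest route is to note that the deficit measure $\mu=\|T\|-T\llcorner\phi$ is nonnegative and that $\mu(B_r(p))\le\mu(\mathbb{R}^{2n}\cap B_{s}(p))=o(s^n)$. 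This alone is not enough, so the estimate has to be combined with monotonicity: $\Theta(T,p,r)\le 2$, with equality along the calibrated comparison. The precise version is that $\mu(B_r(p))\le \omega_n s^n(\Theta(T,p,s)-\Theta_\phi(p,s))\to0$. That last step uses the fact that $\|T\|(B_r)$ is controlled by the monotone quantity. If this works, $T$ is a special Lagrangian integral cycle.

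Third, with $T$ calibrated, the result should follow from the Lagrangian uniqueness of Li and Székelyhidi, or from Imagi--Joyce--dos Santos once regularity is in hand. The exactness condition is automatic because, being asymptotic to two planes, $T$ has $H^1$ information governed by the $SO(n)$-symmetric model at infinity. Alternatively, I would argue directly. A special Lagrangian current with density at infinity $2$ has all densities at most $2$, and tangent cones at points of density $2$ would be pairs of special Lagrangian planes of the same phase. I would rule out interior singularities by Allard's theorem and the classification of such cones. Finally I would identify $T$ with $P+Q$ (the case where it is a cone) or with a Lagrangian catenoid, after translation and scaling, via the Imagi--Joyce--dos Santos uniqueness.
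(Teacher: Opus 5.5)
There is a genuine gap in your Step 2, at exactly the point you flag. Blow-down convergence in the sense of currents and varifolds only gives that the deficit $D_p(s)=\|T\|(B_s(p))-(T\llcorner B_s(p))(\phi)$ is $o(s^n)$. The inequality you write, $\mu(B_r(p))\le\omega_n s^n(\Theta(T,p,s)-\Theta_\phi(p,s))$, is $s^n\cdot o(1)$, which need not tend to zero. Since $D_p$ is nonnegative and nondecreasing, you need $D_p(s)\to 0$. No scale-invariant information at infinity (densities, tangent cones at infinity, recentering at $p$) can rule out a bounded deficit, e.g.\ one that is eventually a positive constant.

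The missing ingredient is a sharp description of $T$ at infinity. The paper obtains it in several steps:
\begin{itemize}
\item Density at infinity $2$ gives, via stratification, that $T$ is a cone or $\dim\sing T\le n-3$.
\item Allard--Almgren writes the ends as graphs $u_P,u_Q$ with sublinear growth.
\item Fredholm theory for the Laplacian upgrades this to $u=a+b|x|^{2-n}+O(|x|^{1-n})$ (with $b\log|x|$ if $n=2$).
\item A single translation kills both constants $a$, because $P\cap Q=\{0\}$.
\item The flux identity $\sum_i\int_{\Gamma_i}\eta_i=0$ from the first variation, together with linear independence of $P^\perp$ and $Q^\perp$, kills the coefficients $b$.
\end{itemize}
Then $d(\iota_X\phi)=n\phi$ and $n\mathbb{M}(T_R)=\sum_i\int_{\Gamma_i}\langle X,\eta_i\rangle$ express $nD(R)$ as a difference of boundary integrals. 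Their leading terms $|\mathbb{S}^{n-1}|R^n$ on each end cancel up to $O(R^{1-n})$, so $D(R)\to 0$, and monotonicity forces $D\equiv 0$.

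Step 3 is also not justified. Imagi--Joyce--dos Santos applies to smooth exact special Lagrangians, and Li--Sz\'ekelyhidi needs a smooth exact approximation; you establish neither. Exactness is not automatic from the asymptotics, since the ends say nothing about the topology of the interior. This is why the paper must assume $H^1(M;\mathbb{R})=0$ in Theorem \ref{main2} to invoke Imagi--Joyce--dos Santos.

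Your fallback of removing singularities by classifying tangent cones also fails. There are non-flat special Lagrangian cones of density below $2$ (e.g.\ the Harvey--Lawson $T^2$-cone in $\mathbb{C}^3$), so density bounds alone give only $\dim\sing T\le n-3$.

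The idea you are missing is how the symmetric angle condition is used. $P$ and $Q$ are invariant under the diagonal $SO(n)$-action, so the moment map components $\mu_{ij}=x^iy^j-x^jy^i$:
\begin{itemize}
\item vanish on $P\cup Q$;
\item are harmonic on the special Lagrangian $T$, and weakly harmonic across the singular set of dimension at most $n-3$;
\item decay at infinity.
\end{itemize}
Hence they vanish identically. Then $T$ is $SO(n)$-invariant and lies in the cone $\{|y|x=\pm|x|y\}$. There $\im(|x|+\sqrt{-1}|y|)^n$ is constant along $T$, which yields $P+Q$ or the Lagrangian catenoid directly, with no appeal to exactness. For $n=2$ the paper instead uses the K\"ahler calibration and the bounded holomorphic function $z^1z^2$ on the parabolic curve $T$.
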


Second, we classify complete smooth embedded oriented minimal submanifolds with vanishing first de Rham cohomology that are asymptotic to a pair of $n$-planes satisfying the angle condition \eqref{eq:angle}:
\begin{theorem}\label{main2}
    For $n\ge 3$, let $M$ be a complete smooth embedded oriented minimal submanifold of dimension $n$ in $\mathbb{R}^{2n}$ with $H^1(M;\mathbb{R})=0$. Suppose that $M$ is asymptotic to $P+Q$ in the sense of currents and varifolds, where $P$ and $Q$ intersect transversely and satisfy the angle condition \eqref{eq:angle}. Then, up to translation and scaling, $M$ is a Lawlor neck.
\end{theorem}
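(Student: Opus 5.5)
The plan is to reduce to the special Lagrangian uniqueness theorem of Imagi--Joyce--dos Santos by showing that $M$ is special Lagrangian with respect to a suitable Calabi--Yau structure on $\mathbb{R}^{2n}$. First, normalize $P$ and $Q$ as in the introduction, with characterizing angles $\theta_1,\dots,\theta_n$ satisfying \eqref{eq:angle}. I would choose a complex structure $J$ and holomorphic volume form $\Omega$ on $\mathbb{R}^{2n}\cong\mathbb{C}^n$ adapted to the basis $e_j$: take $z_j = x_j + \sqrt{-1}y_j$, where $e_j$ spans $x_j$ and $e_{n+j}$ spans $y_j$. Then $P=\mathbb{R}^n$ and $Q=\operatorname{diag}(e^{\sqrt{-1}\theta_j})\mathbb{R}^n$. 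For a suitable phase, both are special Lagrangian exactly because $\sum\theta_j$ is a multiple of $\pi$, and \eqref{eq:angle} forces $\sum\theta_j = 2\theta_n$. One checks the phases agree; in the standard Lawlor setting one uses $\operatorname{Re}(e^{\sqrt{-1}\phi}\Omega)$ with a common phase after possibly reflecting $e_{2n}$. Thus $\phi:=\operatorname{Re}\Omega$ (suitably phased) is a calibration with $\phi|_{P+Q}=\mathrm{vol}$.

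Second, I would show that $M$ itself is calibrated by $\phi$. The key quantity is the function $f=\phi(\vec M)$ on $M$, which satisfies $|f|\le 1$ and tends to $1$ at infinity since $M$ is asymptotic to $P+Q$. For a minimal submanifold, the $n$-form $\phi$ is parallel, so $\Delta_M f$ can be expressed through the second fundamental form. The cleaner route uses the Lagrangian angle. Consider the Kähler form $\omega$ restricted to $M$. Since $H^1(M)=0$, I would construct a primitive: $\omega=d\lambda$ with $\lambda$ the Liouville form, and $\lambda|_M$ is closed if $M$ is Lagrangian, hence exact. Lagrangianity is not known a priori, however, so instead I would apply a first-variation/monotonicity argument. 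Compare the mass of $M\cap B_R$ with that of $P+Q$ in $B_R$ via the calibration: $\int_{M\cap B_R}(1-f)\,d\mathcal H^n = \|M\llcorner B_R\| - \int_{M\cap B_R}\phi$. By Stokes with $\phi=d\psi$, $\psi=\frac1n \iota_X\phi$ for the position field $X$, the second term is a boundary term on $\partial B_R$. That boundary term converges to the corresponding term for $P+Q$, which is $\|(P+Q)\llcorner B_R\|$. The monotonicity identity for stationary $M$, together with asymptotics to $P+Q$ as varifolds, gives density at infinity equal to $2$ and controls $\|M\llcorner B_R\| - 2\omega_nR^n = o(R^n)$. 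Dividing by $R^n$ only gives $o(R^n)$ control, which is too weak. To improve this I would use the decay of $M$ to $P+Q$: by Allard/Simon regularity at infinity, and since each end is a graph over a plane with $C^1$ decay rate $O(r^{-\alpha})$, the boundary discrepancy becomes $o(R^{n-1})\cdot R \to$ bounded. An $L^1$ bound with integrand $(1-f)\ge0$ then forces $\int_M(1-f)<\infty$. I expect this decay step to be the main obstacle, needing the transversality and $n\ge3$ to get rate faster than $r^{2-n}$ type harmonic decay. Then the Simons-type identity $\Delta_M f \le -c|A|^2 f$ near $f=1$, or the subharmonicity of the Lagrangian angle defect combined with $H^1=0$ to define a global angle $\theta$ with $\Delta\theta=0$ and $\theta\to$ const at both ends, yields $\theta\equiv$ const, i.e.\ $f\equiv1$. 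Here $H^1=0$ is used to lift the phase $\Omega|_M/|\Omega|_M|$ to a real harmonic function; a bounded harmonic function on $M$ with the same limit on both ends is constant by the maximum principle.

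Third, once $M$ is special Lagrangian and $H^1(M)=0$, $\lambda|_M$ is exact, so $M$ is an exact special Lagrangian asymptotic to $P+Q$ with $P,Q$ transverse. The theorem of Imagi--Joyce--dos Santos then identifies $M$, up to translation and scaling, as a Lawlor neck. The case $M=P+Q$ is excluded, since a union of transversely intersecting planes is not smooth and embedded.
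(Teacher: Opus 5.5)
Your first and third steps are fine in outline, with one small correction in the first. The sum $\sum_j\theta_j=2\theta_n$ is not a multiple of $\pi$ in general. What works is to take $Je_j=e_{n+j}$ for $j<n$ and $Je_n=-e_{2n}$. Then $Q=\operatorname{diag}(e^{\sqrt{-1}\theta_1},\dots,e^{\sqrt{-1}\theta_{n-1}},e^{-\sqrt{-1}\theta_n})\,\mathbb{R}^n$ has phase $\theta_1+\dots+\theta_{n-1}-\theta_n=0$, the same as $P$.

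\textbf{The gap is in the second step.} Your quantity $\mathcal{E}(R)=\int_{M\cap B_R}(1-f)\,d\mathcal{H}^n$ is the right one, and since $1-f\ge 0$ it is nondecreasing in $R$. So you must show $\mathcal{E}(R)\to 0$ as $R\to\infty$, not merely that it stays bounded; then $\mathcal{E}\equiv 0$, i.e.\ $f\equiv 1$. This is the paper's argument, and it uses two ingredients your proposal lacks.

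\emph{Mass as a boundary term.} The mass must also be written exactly as a boundary term, via the first variation formula with the position field: $n\,\mathbb{M}(M\cap B_R)=\sum_i\int_{\Gamma_i}\langle X,\eta_i\rangle$. Monotonicity only gives $o(R^n)$, as you observed.

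\emph{Sharp asymptotics of the ends.} You need the expansion $u=a+b|x|^{2-n}+O(|x|^{1-n})$ on each end, obtained from Allard--Almgren followed by the Laplacian bootstrap. Transversality lets you translate away $a_P$ and $a_Q$ simultaneously. The flux identity $\sum_i\int_{\Gamma_i}\eta_i=0$ together with $P^\perp\cap Q^\perp=\{0\}$ then forces $b_P=b_Q=0$. With $u=O(r^{1-n})$ and $Du=O(r^{-n})$, both boundary integrals equal $|\mathbb{S}^{n-1}|R^n+o(1)$. This uses that the first-order terms in $\iota_X\phi$ drop out, because a calibrated plane is a critical point of $\phi$ on the Grassmannian. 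A $C^1$ decay rate $O(r^{-\alpha})$ is far too weak for this, and the estimate ``$o(R^{n-1})\cdot R\to$ bounded'' is not valid.

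Your fallback for passing from $\int_M(1-f)<\infty$ to $f\equiv 1$ does not close the gap either.
\begin{itemize}
\item Finiteness of $\int_M(1-f)$ carries no rigidity by itself.
\item The inequality $\Delta_M f\le -c|A|^2 f$ is not an available identity for an arbitrary minimal submanifold and the special Lagrangian calibration. Applying it ``near $f=1$'' would require $f$ to be close to $1$ on all of $M$, which you do not know.
\item The Lagrangian-angle route is circular. The angle, and its global lift via $H^1(M)=0$, exist only once $M$ is known to be Lagrangian, which is exactly what this step must prove. Moreover, for a Lagrangian minimal submanifold the angle is automatically locally constant since $H=J\nabla\theta$, so no maximum-principle argument would be needed.
\end{itemize}
In the paper, $H^1(M;\mathbb{R})=0$ plays no role in showing that $M$ is special Lagrangian. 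It is used only afterwards, to make $M$ exact so that the theorem of Imagi--Joyce--dos Santos applies.
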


\section*{Acknowledgments}
I am deeply grateful to Rick Schoen for suggesting this problem and for the many enlightening discussions. I also thank Kyeongsu Choi for insightful discussions and encouragement. 
The author was supported in part by the KIAS Individual Grant MG104801.

\section{Preliminaries}

\subsection{Integral currents}
We briefly recall some basic facts about stationary integral currents. For further background, we refer to \cite{Almgren,Federer-GMT,Simon-GMT,Morgan-GMT}.

A $n$-dimensional rectifiable current
\[
T=t(S,\theta,\xi)
\]
in $\mathbb{R}^N$ consists of
\begin{itemize}
    \item an underlying $n$-rectifiable set $S$,
    \item a positive integer-valued multiplicity function
    \[
    \theta:S\to \mathbb{Z}^+,
    \]
    \item a unit simple $n$-vector-valued orientation function
    \[
    \xi:S\to \bigwedge_n \mathbb{R}^N.
    \]
\end{itemize}
A rectifiable $n$-current $T$ is called an integral current if $\partial T$ is a rectifiable $(n-1)$-current. In particular, an integral current $T$ with $\partial T=0$ is called an integral cycle.
The mass $\mathbb{M}(T)$ of $T$ is 
\[
\mathbb{M}(T)=\int_S \theta\,d\mathcal{H}^k.
\]

An integral $n$-current $T$ is called stationary if
\[
    \delta T(X)=\int_S \operatorname{div}_{T} X \, d\|T\|=0
\]
for every compactly supported smooth vector field $X$ in $\mathbb{R}^N$.

\subsection{Calibrations}

We recall some basic facts about calibrations and calibrated currents. For further details on calibrations, see \cite{HL,Harvey}.

A closed $n$-form $\phi$ on $\mathbb{R}^N$ is called a calibration if 
\[
    \phi(\xi) \le 1
\]
for all unit simple $n$-vector $\xi$. 
An integral $n$-current $T=t(S,\theta,\xi)$ is said to be calibrated by $\phi$ if 
\[
    \phi_x(\xi(x))=1
\]
for $\|T\|$-almost every $x$.

Let $\mathbb{C}^n$ have the standard coordinates $z^j=x^j+iy^j$, for $j=1, \cdots, n$. We consider the standard complex structure $J$, K{\"a}hler form $\omega$, and holomorphic volume form $\Omega$. In the standard coordinates,
        \begin{align*}\label{eq:Calabi-Yau}
            J\left(\frac{\partial}{\partial x^j} \right)=\frac{\partial}{\partial y^j}, \quad J\left(\frac{\partial}{\partial y^j} \right)=-\frac{\partial}{\partial x^j}\\
            \omega = \sum_{j=1}^{m} dx^j \wedge dy^j = \frac{i}{2} \sum_{j=1}^{n} dz^j \wedge d\bar{z}^j \\
            \Omega = dz^{1} \wedge \cdots \wedge dz^{n}.          
        \end{align*}

For each $1\le k \le n$, the form $\omega^k/k!$ is a calibration, called the K\"ahler calibration. A locally integral $2k$-current $T$ is calibrated by $\omega^k/k!$ if and only if $\Vec{T}$ is invariant under $J$.

For each real number $\theta$, the real part $\re(e^{-i\theta}\Omega)$ is a calibration, called the special Lagrangian calibration of phase $\theta$. A locally integral $n$-current $T$ is said to be special Lagrangian with phase $\theta$ if $T$ is calibrated by $\re(e^{-i\theta}\Omega)$.

\section{Stationary integral currents}
Throughout this paper, $T$ denotes a stationary integral $n$-cycle of multiplicity $1$ in $\mathbb{R}^N$.
\subsection{Partial regularity}
In this section, we prove a partial regularity for stationary integral $n$-cycle in $\mathbb{R}^N$.

We recall the stratification theorem for stationary integral currents \cite{Federer70, Almgren83, White97}.
Let $T$ be a stationary integral n-current in $\mathbb{R}^N$.
We denote by $\Sigma_k$ the set of $x\in \spt T$ such that no tangent cone at $T$ at $x$ has more than a k-dimensional space of translational symmetries, i.e. 
\begin{align*}
    \Sigma_k=\{x\in \spt T: \, \text{no tangent cone has the form}\, \mathbb{R}^{k+1}\times C
\}.
\end{align*}
Then, we have a sequence of increasing sets
\begin{equation*}
    \Sigma_0 \subseteq \Sigma_1 \subseteq \cdots \subseteq \Sigma_n = \sing T \subseteq \spt T.
\end{equation*}
Then, Each $\Sigma_k$ has Hausdorff dimension at most $k$, and $\Sigma_0$ is countable.

Applying the stratification theorem for stationary integral currents, we have the partial regularity.

\begin{theorem}\label{reg} 
Let $T$ a stationary integral n-cycle in $\mathbb R^N$. If the density of $T$ at infinity at most $2$, then either it is a cone or it is a smooth embedded submanifold away from a closed set $\Sigma$ of Hausdorff dimension at most $n-3$. Moreover, the support of $T$ is connected unless $T$ is a disjoint union of two $n$-planes.
\end{theorem}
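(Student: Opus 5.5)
The plan is to combine the monotonicity formula with the stratification theorem and with a classification of stationary cones of density at most $2$. The global ingredient is monotonicity: for every $x\in\spt T$ and $r>0$,
\[
\Theta(T,x)\le \frac{\|T\|(B_r(x))}{\omega_n r^n}\le \Theta(T,\infty)\le 2 .
\]
Equality at some point forces $T$ to be a cone centered there. So if $T$ is not a cone, every point of $\spt T$ has density strictly less than $2$. The same bound $<2$ then holds for all tangent cones, and for tangent cones of those tangent cones, since density is upper semicontinuous and monotone along blow-ups.

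The local step classifies the low-dimensional singular strata. A point in $\Sigma_{n-1}\setminus\Sigma_{n-2}$ or $\Sigma_n\setminus\Sigma_{n-1}$ has a tangent cone of the form $\mathbb{R}^{n-1}\times C$ or $\mathbb{R}^n\times C$.
\begin{itemize}
\item If $C$ is $0$-dimensional, the tangent cone is an integer-multiplicity $n$-plane. Multiplicity one follows from density $<2$, and Allard's regularity theorem then makes the point regular.
\item If $C$ is $1$-dimensional, it is a stationary cone in $\mathbb{R}^{N-n+1}$, i.e.\ a finite union of rays with integer multiplicities. The cycle condition $\partial T=0$ passes to tangent cones, so the oriented rays must balance as currents. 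Stationarity requires the weighted unit vectors to sum to zero.
\end{itemize}
Density $<2$ means the total multiplicity is $<4$, since the density of a union of rays counts half per ray. With at most three rays, an integral cycle cannot consist of an odd number of multiplicity-one rays in a balanced configuration. Three rays at $120^\circ$ are stationary but cannot be oriented to form a cycle: each ray's orientation contributes $\pm$ its endpoint to the boundary, and three $\pm1$'s never sum to zero. So $C$ is two opposite rays, i.e.\ a line, and the tangent cone is a plane, hence regular.

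It follows that $\sing T\subseteq\Sigma_{n-2}$. The next step excludes $\Sigma_{n-2}\setminus\Sigma_{n-3}$ by the same method. Here $C$ is a $2$-dimensional stationary integral cone in some $\mathbb{R}^{m}$ with density $<2$ and no boundary. Its link is a stationary geodesic network on the sphere. By the $1$-dimensional classification applied at the vertices of the network, which are points of the cone of density $<2$, the link is a union of great circles. The vertices cannot be triple junctions because of the orientation/cycle condition, so they are crossings. A union of $k$ great circles has density $k$, so density $<2$ forces a single great circle. The cone is then a plane, and the point is regular. Hence $\sing T\subseteq\Sigma_{n-3}$, of Hausdorff dimension at most $n-3$, and it is closed as the complement of the regular set. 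I expect the main obstacle to be this $2$-dimensional cone step, namely ruling out crossing points of density exactly $2$ in the link. This is precisely why the argument needs the strict inequality, which is where the non-cone alternative is used. One point to check carefully is that a transverse pair of planes through the origin has density exactly $2$ at the vertex, so it falls under the ``cone'' alternative.

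For connectedness, suppose $\spt T=A\cup B$ with $A,B$ disjoint, nonempty and closed. Each restriction $T\llcorner A$ and $T\llcorner B$ is then a stationary integral cycle with positive density at infinity. Each such density is at least $1$, because monotonicity from any point of its support gives density $\ge1$. Since the densities add, both equal exactly $1$. A stationary integral cycle with density $1$ at infinity has constant density $1$ by monotonicity, so it is a cone at each of its points, hence a multiplicity-one plane. So $T$ is a disjoint union of two planes, which must be parallel.
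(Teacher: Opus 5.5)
Your proposal is correct and follows essentially the same route as the paper. Monotonicity gives either a cone or density $<2$ at every point. The top three strata are then emptied using that density bound together with the cycle condition on tangent cones. Your use of the one-dimensional classification at the vertices of the link fills in a step that the paper's terse treatment of $\Sigma_{n-2}\setminus\Sigma_{n-3}$ skips. Connectedness comes from monotonicity exactly as you spell it out.

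The one slip is your closing claim that the two disjoint planes must be parallel. This fails as soon as the codimension is at least $2$, for example $\mathrm{span}(e_1,e_2)$ and $e_4+\mathrm{span}(e_1,e_3)$ in $\mathbb{R}^4$. The statement does not need that claim, so the proof is unaffected.
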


    \begin{proof}
    If $\Theta(T, x)=2$ for some $x\in \spt T$, by monotonicity formula, $T$ is a cone at $x$. Now suppose that $\Theta(T,x)<2$ for all $x\in \spt T$.
    
     We will show $\Sigma_n=\Sigma_{n-1}=\Sigma_{n-2}$.
    Let $x \in \Sigma_n \setminus \Sigma_{n-1}$. Then, $x$ has a tangent cone of the form $\mathbb{R}^n$. Since $\Theta(T,x)<2$, the tangent cone must be multiplicity 1 subspace, therefore $x$ is a regular point. This is a contradiction.

    Next, $x\in \Sigma_{n-1} \setminus \Sigma_{n-2}$. Then, $x$ has a tangent cone $C_1$ of the form $\mathbb{R}^{n-1}\times C_{1}'$, where $C_{1}$ is a 1-dimensional minimal cone with an isolated singularity. Since a 1-dimensional cone is a union of rays and $\Theta(T,x)<2$, $C_1$ is a union of three half-planes. On the other hand, a union of three half-planes always has non-empty boundary regardless of orientation. Thus, $C_1$ cannot be a tangent cone of integral cycle. This is a contradiction.

    Let $x\in \Sigma_{n-2} \setminus \Sigma_{n-3}$. Then, $x$ has a tangent cone $C_2$ of the form $\mathbb{R}^{n-1}\times C_{2}'$, where $C_{2}$ is a 2-dimensional minimal cone with an isolated singularity. Thus, the intersection of $C_{2}'$ with the unit sphere must be a union of smooth geodesics in $S^{N-1}$. Since $\Theta(T,x)<2$, we have $C_{2}'$ is a single geodesic, therefore, $C_2$ is a multiplicity 1 subspace, therefore $x$ is a regular point. This is a contradiction.
    
      Finally, the last statement follows from the monotonicity formula.
    \end{proof}

\subsection{Regularity at infinity}
In \cite{Schoen}, Schoen introduced the notion of regularity at infinity for complete minimal hypersurfaces. This notion extends naturally to stationary integral $n$-cycles in $\mathbb{R}^N$.

\begin{definition}\label{regular at infinity}
A stationary locally integral $n$-cycle $T$
in $\mathbb{R}^N$ is said to be \emph{regular at infinity}
if there exist a compact set $K\subset\mathbb{R}^N$ and $R_0>0$
such that $\spt T\setminus K$ has finitely many
connected components, each given by the graph of a smooth map
\[
    u_i:P_i\setminus\overline{B_{R_0}^{P_i}}\longrightarrow P_i^\perp,
    \qquad i=1,\dots,k,
\]
with bounded gradient, where $P_i$ is an $n$-plane and $B_R^{P_i}=\{x\in P_i:|x|<R\}$.
Moreover, we require as $|x|\to\infty$, 
\[
    u_i(x)=
    \begin{cases}
        a_i+b_i\log|x|+w_i(x),    & n=2,\\
        a_i+b_i|x|^{2-n}+w_i(x),  & n\ge 3,
    \end{cases}
\]
where $w_i(x)=O(|x|^{1-n})$ and $a_i,b_i\in P_i^\perp$.
\end{definition}

Next, we show that sublinear growth implies regularity at infinity.

\begin{theorem}\label{inf}
    Suppose that there exist a compact set $K\subset\mathbb{R}^N$ and $R_0>0$
such that $\spt T\setminus K$ has finitely many
connected components, each given by the graph of a smooth map
\[
    u_i:P_i\setminus\overline{B_{R_0}^{P_i}}\longrightarrow P_i^\perp,
    \qquad i=1,\dots,k,
\]
where $P_i$ is an $n$-plane and $B_R^{P_i}=\{x\in P_i:|x|<R\}$.
If each $u_i$ satisfies 
    \begin{equation*}
        |f(x)|+|x||Df(x)| \le C |x|^{\alpha}
    \end{equation*}
    for some $0<\alpha <1$ and for all sufficiently large $|x|$, then $T$ is regular at infinity.
\end{theorem}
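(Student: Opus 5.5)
Fix one end, write $u=u_i$ on $P\setminus \overline{B_{R_0}^P}$ with $P=P_i\cong\mathbb{R}^n$ and $P^\perp\cong\mathbb{R}^{N-n}$ (here $f$ in the statement is $u_i$). The plan is to view the minimal surface system for the graph of $u$ as a perturbation of the Laplace equation, and then use the decay of $Du$ to bootstrap the expansion of $u$ at infinity. Stationarity of $T$ together with smoothness of the graph means $u$ solves the minimal surface system
\[
\partial_j\bigl(\sqrt{g}\,g^{jk}\,\partial_k u\bigr)=0,\qquad g_{jk}=\delta_{jk}+\langle \partial_j u,\partial_k u\rangle .
\]
The hypothesis gives $|Du|\le C|x|^{\alpha-1}\to0$, so $u$ has bounded gradient for large $|x|$. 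The system is then uniformly elliptic with coefficients close to $\delta_{jk}$.

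\textbf{Step 1: scaled estimates.} On each annulus $A_R=\{R<|x|<4R\}$, rescale by $v(y)=R^{-1}u(Ry)$. Then $|Dv|\le CR^{\alpha-1}$ is small, and elliptic regularity for the minimal surface system with small gradient (Schauder or Allard-type) gives
\[
|D^2u|\le CR^{\alpha-2},\qquad |D^3u|\le CR^{\alpha-3}\quad\text{on } A_R .
\]
Write the system as $\Delta u=F$ with
\[
F=-\partial_j\bigl((\sqrt{g}g^{jk}-\delta^{jk})\partial_ku\bigr).
\]
Since $\sqrt g g^{jk}-\delta^{jk}=O(|Du|^2)$, we get
\[
F=\partial_j G^j,\qquad |G|\le C|Du|^3\le C|x|^{3(\alpha-1)},\qquad |F|\le C|x|^{3\alpha-4}.
\]

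\textbf{Step 2: bootstrap.} Cut off $u$ near $|x|=R_0$ and extend it. Let $w$ solve $\Delta w=F$ on $\mathbb{R}^n$ via the Newtonian potential. Because $F$ is in divergence form and decays like $|x|^{3\alpha-4}$, weighted potential estimates give $Dw=O(|x|^{3\alpha-3+\epsilon})$ when $3\alpha-3>1-n$; when $3\alpha-3<1-n$ they give $Dw=O(|x|^{1-n})$, with the leading monopole part absorbed into the $|x|^{2-n}$ term. The difference $u-w$ is harmonic outside a ball and sublinear, hence
\[
u-w=a+b\Phi(x)+O(|x|^{1-n}),
\]
where $\Phi=\log|x|$ for $n=2$ and $\Phi=|x|^{2-n}$ for $n\ge3$. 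This comes from the Laurent (spherical harmonic) expansion: sublinearity kills all growing harmonics except constants, and the $\log$ case gives $b\log|x|$. So $|Du|$ improves from $|x|^{\alpha-1}$ to $|x|^{\beta-1}$ with $\beta=\max(3\alpha-2+\epsilon,\,2-n)$, after absorbing $a$. Iterating finitely many times yields $Du=O(|x|^{1-n})$, and then $F$ decays fast enough, $F=\partial G$ with $|G|\le C|x|^{3(1-n)}$, that $w=c\,\Phi+O(|x|^{1-n})$. This gives exactly the form in Definition~\ref{regular at infinity}.

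\textbf{Main obstacle.} The hard part is the first improvement step when $\alpha$ is close to $1$, where the gain $3\alpha-2$ versus $\alpha$ is small. A weighted Schauder or potential estimate must give genuine improvement at non-integer weights, avoiding the exceptional exponents. I would handle this by working with the divergence form directly, choosing $\epsilon$ to dodge the exceptional weights, and iterating. For $n=2$, sublinearity means $\log$ growth can appear, so care is needed to use the cubic structure, which yields integrable decay $|x|^{-3+}$. The analogous monopole bookkeeping for $F$ then gives a constant plus $b\log|x|$ plus $O(|x|^{-1})$.
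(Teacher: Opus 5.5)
Your proposal is correct and follows essentially the paper's route. The paper likewise uses scaled elliptic estimates to write the minimal surface system as $\Delta f=(\delta^{ij}-g^{ij})f_{ij}=O(|x|^{3\alpha-4})$. It then applies weighted Fredholm theory for the Laplacian at a non-exceptional weight to improve the exponent from $\alpha$ to roughly $3\alpha-2$, and iterates until the expansion $a+b|x|^{2-n}+O(|x|^{1-n})$ (or $a+b\log|x|+O(|x|^{-1})$ when $n=2$) appears. Your divergence form $F=\partial_jG^j$ with $|G|\le C|Du|^3$, and your observation that the plain Newtonian potential diverges for $\alpha\ge 2/3$ so a weighted or modified-kernel solution is needed, are refinements of the same argument rather than a different approach.
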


\begin{proof}
    Assume that $n\ge3$. The case $n=2$ follows by the same argument. Using the standard elliptic estimates, we also have $|x|^k|D^{k}f|\le C|x|^{\alpha}$ for all $k=0,1,2 \cdots$.
    Choosing coordinates, we may assume that
    \begin{align*}
    P=\{x \in \mathbb{R}^{N} : x^j =0 \text{ for } j=n+1, \cdots, N\}
    \end{align*}
    Then, $f$ satisfies the minimal surface system
    \begin{align*}
        g^{ij}f_{ij}=0 
    \end{align*}
where $g_{ij}=\delta_{ij}+f_i\cdot f_j$. Since $f_i=O(|x|^{\alpha-1})$ and $g^{ij}=\delta^{ij}+O(|x|^{2(\alpha -1)})$, we have
\begin{align*}
    \Delta f= (\delta^{ij}-g^{ij})f_{ij}=O(|x|^{3\alpha -4}).
\end{align*}

Recall the standard Fredholm theory for the Laplace operator (\cite{Lockhart-McOwen}, \cite{Bartnik}).
Suppose that a function $\varphi:\mathbb{R}^n\setminus B_R \to \mathbb{R}$ satisfies
\begin{align}
    |\varphi(x)|\leq c|x|^\alpha \\ |\Delta \varphi|+|x||\nabla \Delta \varphi|\leq C |x|^p 
\end{align}
for some $p<-1$ and not an integer.
If $-2<p<-1$, then 
\begin{equation}
    |\varphi(x)|+|x||\nabla \varphi(x)|\leq c|x|^{p+2}.
\end{equation}
If $-n<p<-2$, then there is a real number $a$ such that
\begin{equation}
    |\varphi(x)-a|+|x||\nabla \varphi(x)| \leq c|x|^{p+2}.
\end{equation}

If $-n-1<p<-n$, then there are real numbers $a$, $b$ so that
\begin{equation}
    |\varphi(x)-a-b|x|^{2-n}|+|x||\nabla (\varphi(x)-b|x|^{2-n})|\leq c |x|^{p+2}
\end{equation}
Similar statement holds for smaller $p$.

Since $3\alpha-4<-1$, we can choose $p$ so that $-2<p<-1$ and $3\alpha-4<p<\alpha-2$. Thus, we get the extra decay $f=O(|x|^{p+2})$. Plugging into minimal surface system, we iterate and obtain as much decay as we want.
\end{proof}

\subsection{Calibrations} 
Throughout this section, let $T$ be a stationary integral $n$-cycle in $\mathbb{R}^N$ whose singular set has Hausdorff dimension at most $n-2$. We assume that $T$ is regular at infinity.

We fix $R>R_0$ and set
\[
    F_i(x)=x+u_i(x),\qquad
    S_i=\partial B_R^{P_i},\qquad
    \Gamma_i=F_i(S_i).
\]

Let $E_i$ denote the current of integration over
$F_i(P_i\setminus\overline{B_R^{P_i}})$, with the orientation
induced by $T$. Then
\[
    T=T_R+\sum_{i=1}^k E_i,
\]
where $T_R$ is a compactly supported integral $n$-current.

Let $\eta_i$ denote the outward unit conormal to $T_R$ along
$\Gamma_i$. 
The first variation formula gives the following identities.

\begin{lemma}\label{flux}
We have
\begin{align}
    \sum_{i=1}^k \int_{\Gamma_i} \eta_i\,d\mathcal{H}^{n-1}
    &=0,\\
    \sum_{i=1}^k \int_{\Gamma_i}
    \langle X,\eta_i\rangle\,d\mathcal{H}^{n-1}
    &=n\,\mathbb{M}(T_R),
\end{align}
where $X$ is the position vector field on $\mathbb{R}^N$.
\end{lemma}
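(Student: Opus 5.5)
The plan is to test the stationarity of $T$ against vector fields that equal a constant vector, or the position field $X$, on a neighborhood of $\spt T_R$, and then convert the resulting first variation of $T_R$ into a boundary integral over $\bigcup_i\Gamma_i$. Concretely, I take a cutoff $\chi\in C^\infty_c(\mathbb{R}^N)$ with $\chi\equiv1$ on a large ball containing $\spt T_R$. For a constant vector $v$ and for $X$ I consider the fields $\chi v$ and $\chi X$.

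The first step is to establish the first variation formula for $T_R$ with boundary:
\[
\int \operatorname{div}_{T_R} Y\,d\|T_R\| = \sum_{i=1}^k \int_{\Gamma_i}\langle Y,\eta_i\rangle\,d\mathcal H^{n-1}
\]
for all smooth $Y$. By stationarity of $T$ we have $\delta T_R(Y)=-\sum_i\delta E_i(Y)$ for compactly supported $Y$. Each $E_i$ is a smooth minimal graph over $P_i\setminus \overline{B_R^{P_i}}$ with smooth boundary $\Gamma_i$. Applying the classical divergence theorem on this smooth submanifold with boundary (the mean curvature vanishes) gives
\[
\delta E_i(Y)=\int_{\Gamma_i}\langle Y,\nu_i\rangle\,d\mathcal H^{n-1},
\]
where $\nu_i$ is the outward conormal of $E_i$. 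Along $\Gamma_i$ we have $\nu_i=-\eta_i$, since $T_R$ and $E_i$ glue smoothly across $\Gamma_i$. Singularities of $T$ play no role here: the formula for $T_R$ is obtained directly from stationarity of $T$, and the boundary computation takes place only on the smooth ends.

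The second step is to choose $Y=\chi v$. On $\spt T_R$ this field is the constant $v$, so $\operatorname{div}_{T_R}Y=0$, and the left side vanishes. Since $v$ is arbitrary, this yields $\sum_i\int_{\Gamma_i}\eta_i=0$.

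The third step is to choose $Y=\chi X$. On $\spt T_R$ we have $\operatorname{div}_{T_R}X=n$ at $\|T_R\|$-a.e.\ point, where the approximate tangent plane exists. Hence the left side equals $n\,\|T_R\|(\mathbb{R}^N)=n\,\mathbb{M}(T_R)$, using multiplicity $1$.

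The only delicate point is justifying the divergence theorem on the ends and the sign convention $\nu_i=-\eta_i$. This is routine, because near $\Gamma_i$ the current is the smooth embedded graph of $u_i$. The cutoff $\chi$ makes the integrals over the noncompact ends well defined. Note that this argument does not use the decay of $u_i$; it uses only the compact support of $\chi$.
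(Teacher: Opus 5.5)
Your proof is correct and follows the paper's route: the paper states these identities as a direct consequence of the first variation formula, tested against constant vector fields and the position field, and you supply the details it leaves implicit (the cutoff $\chi$, the divergence theorem on the minimal ends, and the sign $\nu_i=-\eta_i$). One minor remark: invoking multiplicity $1$ in the last step is unnecessary, since $\|T_R\|(\mathbb{R}^N)=\mathbb{M}(T_R)$ holds for any integral current.
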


\medskip

Let $C$ be an oriented cone consisting of oriented $n$-planes $P_i$ for $i=1,\cdots, k$. We assume that the orthogonal complements $P_{i}^{\perp}$ are linearly independent, i.e., if 
\[
    b_1+\cdots+b_k=0, \quad b_i\in P_{i}^{\perp},
\]
then $b_i=0$ for all $i$. 

Let $T$ be a stationary integral $n$-cycle in $\mathbb{R}^N$ whose singular set has Hausdorff dimension at most $n-2$. We assume that $T$ is regular at infinity. In this setting, we obtain the following.

\begin{theorem}\label{calibration}
    Let $\phi$ be a constant calibration. Assume that $C$ is calibrated by $\phi$. If $T$ is asymptotic to $C$ in the the sense of currents, then $T$ is calibrated by $\phi$. Moreover, each graph satisfies $u=O(|x|^{1-n})$, up to a translation.
\end{theorem}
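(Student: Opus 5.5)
We compare the mass of $T_R$ with $\phi$ evaluated on $T_R$, using Stokes' theorem and the flux identities of Lemma~\ref{flux}. The plan has four steps: first kill the constant terms $a_i$, then show the flux coefficients $b_i$ vanish, then conclude calibration, and finally read off the decay.

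\textbf{Step 1: the constants $a_i$.} By Definition~\ref{regular at infinity} (taking $n\ge 3$; for $n=2$ replace $|x|^{2-n}$ by $\log|x|$), each end of $T$ is a graph $u_i = a_i + b_i|x|^{2-n} + w_i$. Asymptotic convergence to $C$ in the sense of currents only forces the ends to be parallel to the $P_i$, so the $a_i$ need not vanish. Since the $P_i^\perp$ are linearly independent, any tuple $(a_i)$ is realized by a single translation vector: choose $v$ with $\pi_{P_i^\perp}(v)=a_i$. I expect this to be possible because the projections onto the independent subspaces $P_i^\perp$ give an injective dual map. Hence after translating we may assume every $a_i=0$.

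\textbf{Step 2: the coefficients $b_i$.} Compute the conormal flux of each end. On $\Gamma_i$ one has $\eta_i = -x/|x| + (\text{normal part}) + O(R^{1-n})$, and the normal component is $-\partial_r u_i = (n-2)b_iR^{1-n}+O(R^{-n})$. Integrating over a sphere of area $\sim R^{n-1}$, the tangential parts cancel by symmetry, leaving
\[
\int_{\Gamma_i}\eta_i = c_n b_i + o(1), \qquad c_n=(n-2)|S^{n-1}|\neq0 .
\]
The first identity of Lemma~\ref{flux} then gives $\sum_i b_i=0$ with $b_i\in P_i^\perp$, so independence forces $b_i=0$. Thus $u_i=w_i=O(|x|^{1-n})$, which is the decay claimed in the statement of Theorem~\ref{calibration}.

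\textbf{Step 3: calibration.} Since $\phi$ is constant and closed, $\phi = d\psi$ with $\psi=\tfrac1n\iota_X\phi$. Stokes' theorem gives $T_R(\phi)=\sum_i \pm\int_{\Gamma_i}\psi$. The second identity of Lemma~\ref{flux} gives $n\mathbb M(T_R)=\sum_i\int_{\Gamma_i}\langle X,\eta_i\rangle$. On the flat disc $P_i\cap B_R$, which is calibrated, the same two boundary integrals coincide exactly, because there $\phi(\vec P_i\text{ restricted})$ pairs $\iota_X\phi$ with the orientation of $\partial B_R$ to give $\langle X,\eta\rangle$. With $u_i=O(R^{1-n})$ and $Du_i=O(R^{-n})$, the integrands on $\Gamma_i$ differ from those on $\partial B_R^{P_i}$ by $O(R\cdot R^{-n})\cdot R^{n-1} = O(R^{0})$. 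This is too weak, and is the main obstacle: a naive estimate yields only boundedness of the difference.

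To handle it, I would exploit that the first-order term is linear in $u_i$ and $Du_i$. For $\psi$ the variation is $\tfrac1n\iota_{u}\phi + \tfrac1n\iota_X(\text{terms with } Du)$, and the leading $w_i$ is a harmonic-type decaying map whose spherical mean of $\partial_r w$ and of $w$ vanishes to order $R^{-n}$ after removing $b_i$. Alternatively, I would first obtain sharper decay by iterating the Fredholm expansion from the proof of Theorem~\ref{inf}: the next term is $c_i\cdot\nabla |x|^{2-n}$, whose spherical integrals against constants vanish, so the remaining error is $o(1)$.

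\textbf{Step 4: conclusion.} Step 3 gives $\mathbb M(T_R)-T_R(\phi)\to 0$ as $R\to\infty$, and this quantity is nondecreasing and nonnegative in $R$, since $\phi\le1$ pointwise and the difference is the integral of $1-\phi(\vec T)\ge 0$. Hence $\phi(\vec T)=1$ $\|T\|$-a.e. on every $T_R$, that is, $T$ is calibrated by $\phi$.
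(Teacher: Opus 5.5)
Your architecture matches the paper's: remove the $a_i$ by one translation, kill the $b_i$ with the first flux identity, compare $\int_{\Gamma_i}\langle X,\eta_i\rangle$ with $\int_{\Gamma_i}\iota_X\phi$, and use monotonicity of $\mathbb{M}(T_R)-T_R(\phi)$. Steps 1, 2 and 4 are essentially fine, and Step 1 is more careful than the paper about translating all ends at once. Two minor points on Step 2: the outward conormal of $T_R$ is approximately $+\partial_r$, not $-x/|x|$. Also, the error is $o(1)$ only because the $P_i$-component of $\eta_i$ deviates from $\partial_r$ quadratically in $Du_i$; an $O(R^{1-n})$ error integrated over $\Gamma_i$ would be $O(1)$.

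The genuine gap is Step 3, at the obstacle you flag, and neither proposed fix closes it. The terms of $\iota_X\phi(\vec\Gamma_i)$ that are linear in $(u,Du)$ are $\phi(u\wedge e_1\wedge\cdots\wedge e_{n-1})$ and $R\sum_j\phi(\omega\wedge e_1\wedge\cdots\wedge e_j(u)\wedge\cdots\wedge e_{n-1})$. Their coefficients depend on $\omega$ through the frame of the sphere, so vanishing spherical means of $w$ or $\partial_r w$ say nothing. Concretely, take a dipole $u=Cx|x|^{-n}$ with $C\in\mathrm{Hom}(P,P^\perp)$. Then $e_j(u)=R^{-n}Ce_j$ on $|x|=R$. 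With $(f_1,\dots,f_n)=(\omega,e_1,\dots,e_{n-1})$, the linear terms add up to $R^{1-n}\sum_{k=1}^n\phi(f_1\wedge\cdots\wedge Cf_k\wedge\cdots\wedge f_n)$, which is independent of $\omega$. Its integral over the sphere is the constant $|\mathbb{S}^{n-1}|\sum_k\phi(f_1\wedge\cdots\wedge Cf_k\wedge\cdots\wedge f_n)$. Nothing in your symmetry argument in (b) makes this zero.

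The missing idea is the first-order condition at a calibrated plane. Since $\vec P_i$ maximizes $\phi$ among unit simple $n$-vectors, and the normalization factor is $1+O(t^2)$, we get $\frac{d}{dt}\big|_{t=0}\phi\big((f_1+tAf_1)\wedge\cdots\wedge(f_n+tAf_n)\big)=0$ for every $A\in\mathrm{Hom}(P_i,P_i^\perp)$. This gives $\phi(v\wedge\xi)=0$ for all $v\in P_i^\perp$ and $\xi\in\Lambda^{n-1}P_i$.

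Both linear terms contain exactly one factor in $P_i^\perp$, so they vanish pointwise. Hence $\iota_X\phi(\vec\Gamma_i)=R+O(|u||Du|+R|Du|^2)=R+O(R^{1-2n})$. Likewise $\langle X,\eta_i\rangle=R+O(R^{1-2n})$, since $u\perp\partial_r$ and $u_r,u_\beta,e_\beta\perp x$. The area element of $\Gamma_i$ is $1+O(R^{-2n})$ times that of the sphere. Therefore $n\bigl(\mathbb{M}(T_R)-T_R(\phi)\bigr)=O(R^{-n})\to0$, using only the decay from Step 2; no sharper expansion of $u_i$ is needed. This is what underlies the paper's ``direct computation'' that both boundary integrals equal $|\mathbb{S}^{n-1}|R^n$ up to $o(1)$.
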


\begin{proof}
We first work on a single graph. Translating the plane $P$, we may assume that 
\[
u(x)=
\begin{cases}
b\log|x|+w(x),   & n=2,\\
b|x|^{2-n}+w(x), & n\ge 3,
\end{cases}
\]    
where $w(x)=O(|x|^{1-n})$. 

For $u:P \to P^{\perp}$ and $x\in P$, let $r=|x|$ and $\omega=x/|x|$.
Choose an oriented orthonormal basis $e_1, \cdots, e_{n-1}\in T_xS$.

The outward unit conormal $\eta$ is given by
\[
\eta =
\frac{
    \partial_r +u_r
    -g^{\alpha\beta}(u_r\cdot u_\alpha)(e_\beta+u_\beta)
}{
    \sqrt{
        1 +|u_r|^2
        -g^{\alpha\beta}(u_r \cdot u_\alpha)(u_r \cdot u_\beta)
        }
}.
\]
where $g_{\alpha\beta} = \delta_{\alpha\beta}+u_\alpha \cdot u_\beta$ 
for $1\le \alpha, \beta \le n-1$.

The asymptotic expansion of $\eta$ is 
\[
\eta=
\begin{cases}
\partial_r +br^{-1}+O(r^{-2}),       & n=2,\\
\partial_r +(2-n)br^{1-n}+O(r^{-n}), & n\ge 3.
\end{cases}
\]    

The flux integral is
\[
\int_{\Gamma} \eta =
\begin{cases}
|\mathbb{S}^1|b+O(r^{-1}),          & n=2,\\
(2-n)|\mathbb{S}^{n-1}|b+O(r^{-1}), & n\ge 3.
\end{cases}
\]

By lemma \ref{flux}, letting $r\to \infty$, we have $\sum b_i=0$. Since orthogonal complements $P_{i}^{\perp}$ are linearly independent, we get $b_i=0$ for all $i$.

Therefore, we have $u(x)=O(|x|^{1-n})$ and $Du(x)=O(|x|^{-n})$ for all $n\ge 2$. 

\medskip

    For $R$ large, we consider the $\phi$-calibration energy
    introduced in \cite{Miura-Rupp},
\begin{equation}\label{eq:energy}
    \mathcal{E}_{\phi}[T_R]
    = \mathbb{M}(T_R)-\langle T_R,\phi \rangle.
\end{equation}
Thus, $\mathcal{E}_{\phi}[T_R]$ is monotonically increasing in $R$ and $T$ is calibrated by $\phi$ if and only if $\mathcal{E}_{\phi}[T_R]=0$ for all large enough $R$. 

Since $\phi$ is a constant calibration, using Cartan's formula, we have $d(\iota_X \phi)=n\phi$.
Using Stokes' theorem and theorem \ref{flux}, we have

\begin{equation*}
        n\mathcal{E}_{\phi}[T_R]
        =\sum_{i=1}^k\int_{\Gamma_i} \langle X,\eta_i \rangle \,d\mathcal{H}^{k-1}-\sum_{i=1}^k\int_{\Gamma_i}\iota_X \phi,   
\end{equation*}
where $X$ is the position vector field.

Since $u(x)=O(|x|^{1-n})$ and $Du(x)=O(|x|^{-n})$, a direct computation shows

\begin{align*}
    \int_{\Gamma} \langle X,\eta \rangle \,d\sigma
    &=\int_S \langle x+u(x),\partial_r +O(|x|^{-n}) \rangle 
    \sqrt{\det(\delta_{ij}+u_i\cdot u_j)}\,d\sigma \\
    &=|\mathbb{S}^{n-1}| R^n + O(R^{1-n}).
\end{align*}
and
\begin{align*}
    \int_{\Gamma}\iota_X \phi
    &=\int_S \phi(r\omega+u(x),e_1+e_1(u), \cdots, e_{n-1}+e_{n-1}(u)) \,d\sigma \\
    &=|\mathbb{S}^{n-1}| R^n + O(R^{1-n}).
\end{align*}

Letting $R\to \infty$ in \eqref{eq:energy}, we have $E(T_R)=0$ for all sufficiently large $R>0$.
\end{proof}

\section{Special Lagrangians}

Let $P$ and $Q$ be oriented $n$-planes in $\mathbb{R}^{2n}$. Then, there exists an oriented orthonormal basis $e_1, \cdots, e_{2n}$ for $\mathbb{R}^{2n}$ such that 
        \begin{align*}
            P&=e_1 \wedge \cdots \wedge e_n, \\
            Q&=(\cos{\theta_1}e_1 + \sin{\theta_1}e_{n+1}) \wedge \cdots \wedge (\cos{\theta_n}e_n + \sin{\theta_n}e_{2n})
        \end{align*}
        where $0 \le \theta_1 \le \cdots \le \theta_{n-1} \le \pi/2$ and $\theta_{n-1} \le \theta_{n} \le \pi - \theta_{n-1}$. These angles are called the characterizing angles of $P$ and $Q$.

The angle criterion~\cite{Nance87,L89} says that a pair of oriented $n$-planes is area-minimizing if and only if 
\[
    \theta_{n} \le \theta_1 + \cdots +\theta_{n-1}.
\]

Let $\mathbb{C}^n$ have the standard coordinates $z^j=x^j+iy^j$, for $j=1, \cdots, n$. We consider the standard complex structure $J$, K{\"a}hler form $\omega$, and holomorphic volume form $\Omega$. In the standard coordinates,
        \begin{align*}
            J\left(\frac{\partial}{\partial x^j} \right)=\frac{\partial}{\partial y^j}, \quad J\left(\frac{\partial}{\partial y^j} \right)=-\frac{\partial}{\partial x^j}\\
            \omega = \sum_{j=1}^{m} dx^j \wedge dy^j = \frac{i}{2} \sum_{j=1}^{n} dz^j \wedge d\bar{z}^j \\
            \Omega = dz^{1} \wedge \cdots \wedge dz^{n}.          
        \end{align*}

For each real number $\theta$, the real part $\re(e^{-i\theta}\Omega)$ is a calibration, called the special Lagrangian calibration of phase $\theta$. A locally integral $n$-current $T$ is said to be special Lagrangian with phase $\theta$ if $T$ is calibrated by $\re(e^{-i\theta}\Omega)$.

For each pair of oriented $n$-planes $P$ and $Q$ satisfying the angle condition
\begin{equation}
    \theta_{n} = \theta_1 + \cdots +\theta_{n-1},
\end{equation} there is a special Lagrangian submanifold asymptotic to $P+Q$, which is known as a Lawlor neck \cite{L89}. 

In \cite{IJO2016}, Imagi, Joyce and dos Santos proved a uniqueness theorem for smooth exact special Lagrangian submanifolds asymptotic to a pair of $n$-planes. In \cite{Li-Székelyhidi}, Li and Székelyhidi extended the uniqueness result to Lagrangian integral $n$-cycles under a smooth exact approximation condition.

\subsection{Lagrangian Catenoid}

Consider two oriented $n$-planes
\begin{align*}
    P=\{(x,y) \in \mathbb{R}^{2n} : y=0\}, \quad
    Q=\{(x,y) \in \mathbb{R}^{2n} : y=\tan(\pi/n)x\}.
\end{align*}

Then, with respect to the basis 
\begin{align*}
    e_1=\frac{\partial}{\partial x^1}, \cdots, e_n=\frac{\partial}{\partial x^n}, \\
    e_{n+1}=-\frac{\partial}{\partial y^1}, \cdots, e_{2n}=-\frac{\partial}{\partial y^n},
\end{align*}
the characterizing angles are 
\begin{equation}
    \theta_1 = \cdots = \theta_{n-1} = \frac{\pi}{n}, \quad 
    \theta_{n} = \frac{n-1}{n}\pi.
\end{equation}

In this case, the Lawlor neck asymptotic to $P+Q$ is known as the Lagrangian catenoid and has the following explicit expression:  
\begin{equation*}
    M=\{(x,y) \in \mathbb{R}^{2n} : |y|x=|x|y, \, \operatorname{Im}(|x|+\sqrt{-1}|y|)^n=1, \, |y|<|x|\tan{\frac{\pi}{n}} \}.
\end{equation*}
Moreover, $M$ is invariant under the $SO(n)$-diagonal action on $\mathbb{R}^{2n}$.

\subsection{$SO(n)$-action in $\mathbb{R}^{2n}$}
In this section, we consider $SO(n)$-diagonal action on $\mathbb R^{2n}$. $SO(n)$ acts on $\mathbb R^{2n}$ by
\begin{align*}
    A \cdot (x,y)=(A x,Ay)
\end{align*}
for $A\in SO(n)$.

Notice that a n-dimension submanifold $M$ is invariant under $SO(n)$-diagonal action if and only if all rotation vector fields $V$ on $\mathbb{R}^{2n}$ are tangent to $M$,
\begin{equation*}
    V=x^j \frac{\partial}{\partial x^i}-x^i \frac{\partial}{\partial x^j} 
    + y^j \frac{\partial}{\partial y^i}-y^i \frac{\partial}{\partial y^j}.
\end{equation*}

There are three orbit types for this action. Let 
\begin{equation*}
    C=\{(x,y) \in \mathbb{R}^{2n} : |y|x=\pm|x|y \}.
\end{equation*} Then, the dimension of each orbit is
\begin{equation*}
           \text{dim(Orb($p$))}=   \begin{cases} 
                          0 &  p=(0,0) \\
                          n-1 &  p \in C \setminus \{0\} \\
                          2n-3 &  p \notin C 
                        \end{cases}
\end{equation*}

The $(n+1)$-dimensional cone $C$ in $\mathbb{R}^{2n}$ is a union of planes $x=\lambda y$. In particular, the cone $C$ is invariant under $SO(n)$-diagonal action.

Let $\mu_{ij}=x^iy^j-x^jy^i$ for $1 \le i, j \le n$. Then, 
$p \in C \setminus \{0\}$ if and only if there exists $i$ such that 
\begin{align*}
    x_i(p)\neq 0 \quad\text{  or  } \quad y_i(p)\neq 0,\\
    \mu_{ij}(p)=0 \quad\text{for all} \quad 1 \le  j \le n.
\end{align*}

\begin{lemma}\label{basis}
    Let $C=\{p=(x,y) \in \mathbb{R}^{2n} : |y|x=\pm|x|y \}$. Then, each $p \in C \setminus \{0\}$ there exists a basis $\{X_1, \cdots, X_n, JX_1, \cdots, JX_n\}$ for $T_{p}\mathbb{R}^{2n}$ such that 
    \begin{enumerate}
        \item $\{X_1, \cdots, X_n, JX_1\}$ is a basis for $T_{p}C$
        \item $\{JX_2, \cdots, JX_n\}$ is a basis for $N_{p}C$
        \item $JX_i\cdot X_j=0$ for all $1 \le i \le j \le n$.
    \end{enumerate}
\end{lemma}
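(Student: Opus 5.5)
The plan is to write down the frame explicitly. The first step is to parametrize $C$ near a point $p\neq 0$.

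\emph{Step 1: structure of $C$ near $p$.} The condition $|y|x=\pm|x|y$ says exactly that $x$ and $y$ are parallel in $\mathbb{R}^n$. So for $p=(x,y)\in C\setminus\{0\}$ there is a unit vector $v\in\mathbb{R}^n$ and $(a,b)\in\mathbb{R}^2\setminus\{0\}$ with $x=av$ and $y=bv$. Consider the map
\[
\Phi:(\mathbb{R}^2\setminus\{0\})\times \mathbb{S}^{n-1}\to C,\qquad \Phi(s,t,w)=(sw,tw).
\]
It is onto $C\setminus\{0\}$ and two-to-one, via $(s,t,w)\sim(-s,-t,-w)$. Its differential at $(a,b,v)$ sends $(\sigma,\tau,u)$, with $u\perp v$, to $(\sigma v+au,\ \tau v+bu)$. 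This is injective because $(a,b)\neq 0$. Hence $C\setminus\{0\}$ is a smooth $(n+1)$-manifold near $p$, and
\[
T_pC=\operatorname{span}\{(v,0),\,(0,v)\}\oplus\{(au,bu): u\perp v\}.
\]
One should check that $\Phi$ is an embedding near $(a,b,v)$ and not just an immersion. This follows because $w$ is recovered locally as $\pm x/|x|$ or $\pm y/|y|$, whichever is nonzero.

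\emph{Step 2: the frame.} Complete $v_1=v$ to an orthonormal basis $v_1,\dots,v_n$ of $\mathbb{R}^n$. Recall $J\partial_{x^j}=\partial_{y^j}$, so $J(\xi,\eta)=(-\eta,\xi)$. Set
\[
X_1=(v_1,0),\qquad X_k=\frac{(a v_k,\,b v_k)}{\sqrt{a^2+b^2}}\quad (k\ge 2).
\]
Then
\[
JX_1=(0,v_1),\qquad JX_k=\frac{(-b v_k,\,a v_k)}{\sqrt{a^2+b^2}}.
\]
By Step 1, $X_1,\dots,X_n,JX_1$ all lie in $T_pC$. They are orthonormal, so there are $n+1$ independent vectors and they form a basis of $T_pC$. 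This gives (1).

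\emph{Step 3: orthogonality.} A direct computation shows:
\begin{itemize}
\item for $k\ge 2$, $JX_k$ is orthogonal to $(v_1,0)$ and $(0,v_1)$ because $v_k\perp v_1$;
\item $JX_k\cdot X_j=\delta_{jk}(-ab+ab)/(a^2+b^2)=0$ for $j,k\ge2$;
\item $JX_1\cdot X_j=(0,v_1)\cdot(av_j,bv_j)/\sqrt{a^2+b^2}=0$ for $j\ge 2$;
\item $JX_1\cdot X_1=0$.
\end{itemize}
So $JX_i\cdot X_j=0$ for all $i,j$, which gives (3); the frame is in fact a unitary frame.

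In particular, $JX_2,\dots,JX_n$ are $n-1$ orthonormal vectors orthogonal to $T_pC$. Since $\dim N_pC=2n-(n+1)=n-1$, they form a basis of $N_pC$, which gives (2). Finally, $\{X_i,JX_i\}$ is orthonormal, so it is a basis of $T_p\mathbb{R}^{2n}$.

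The only point needing care is Step 1, namely that $C$ is smooth away from $0$ with the stated tangent space; the rest is linear algebra.
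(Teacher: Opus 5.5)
Your proof is correct, but it uses a different frame from the paper's.

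\textbf{The paper's route.} It assumes $x^1(p)\neq 0$ and takes $X_1=\sum_j(x^j\partial_{x^j}+y^j\partial_{y^j})$, the position vector field, and $X_2,\dots,X_n$ the rotation fields of the diagonal $SO(n)$-action. The key observation is $JX_j=-D\mu_{1j}$, where $\mu_{1j}=x^1y^j-x^jy^1$. From this:
\begin{itemize}
\item normality of $JX_2,\dots,JX_n$ follows from $\mu_{1j}|_C=0$;
\item tangency of $JX_1$ follows from $JX_1\cdot JX_j=0$;
\item the orthogonality relations reduce to multiples of $\mu_{1j}$ and $\mu_{ij}$, which vanish on $C$.
\end{itemize}

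\textbf{Your route.} You parametrize $C\setminus\{0\}$ by $(s,t,w)\mapsto(sw,tw)$ and pick a pointwise orthonormal (unitary) frame with $X_1=(v,0)$.

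\textbf{What yours buys.} It makes explicit what the paper leaves implicit: smoothness of $C\setminus\{0\}$, the exact $T_pC$, and $\dim N_pC=n-1$. The paper needs that dimension count for (2). It also uses it tacitly to conclude tangency of $JX_1$, since that step assumes $C$ is locally the regular zero set of $\mu_{12},\dots,\mu_{1n}$. Your construction also treats every point of $C\setminus\{0\}$ uniformly, with no coordinate normalization.

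\textbf{What the paper's buys.} Its frame consists of globally defined vector fields tied to the $SO(n)$-action and to the functions $\mu_{ij}$. Those specific fields are what Lemma \ref{lemma:sLag} uses, through $dz^1(X_j)=z^j$ and $dz^i(X_j)=-z^1\delta^i_j$. Your $X_2,\dots,X_n$ span the same orbit tangent space $\{(au,bu):u\perp v\}$ at $p$, so nothing is lost geometrically. You would, however, have to change basis to reuse your frame in that later computation.
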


\begin{proof}
    Without loss of generality, we may assume $x^1(p)\neq 0$. Define $X_j$ as follows:
    \begin{align*}
        X_1&=\sum_{j=1}^{n} \left(x^j\frac{\partial}{\partial x^j} + y^j\frac{\partial}{\partial y^j}\right)\\
        X_j&= x^{j}\frac{\partial}{\partial x^1} -x^{1}\frac{\partial}{\partial x^j} 
              + y^{j}\frac{\partial}{\partial y^1} -y^{1}\frac{\partial}{\partial y^j},
    \end{align*}
    for $j=2,\cdots, n$. Then, $\{X_1, \cdots, X_n\}$ are linearly indepedent subset of $T_{p}C$. Since $\mu_{1j}=0$ on $C$, $JX_j=-D(\mu_{1j})\in N_{p}C$. 
    Since $JX_1(\mu_{1j})=-JX_1 \cdot JX_j=0$, $JX_1$ is tangent to $C$. 
    For each $j=2, \cdots, n$, we have
    \begin{align*}
        JX_1 \cdot X_1 &= 0 \\
        JX_1 \cdot X_j &= 2\mu_{1j}=0 \\
        JX_1 \cdot JX_j &= X_1 \cdot X_j =0.
    \end{align*} This implies that $\{X_1, \cdots, X_n, JX_1\}$ is linearly independent.
    For $2 \le i <j \le n$, 
    \begin{align*}
        JX_i \cdot X_j &= -\mu_{ij}=0.
    \end{align*}
\end{proof}

\begin{lemma}\label{lemma:sLag}
    Let $T$ be a special Lagrangian integral current of phase $0$ in $\mathbb{C}^n$.
    Assume that $\spt T\subset C$ is connected and $\spt T\subset \{0\}$.
    If $T$ is asymptotic to $P+Q$ in the sense of currents and varifolds, then either $T$ is the Lagrangian catenoid or $T=P+Q$. 
\end{lemma}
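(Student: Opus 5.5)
The plan is to exploit the coisotropic geometry of $C$ to force $SO(n)$-invariance of $T$, and then reduce to an ODE in the orbit space. I read the hypotheses as $\spt T\subset C$ with $\spt T$ connected, and the condition involving $\{0\}$ as concerning how $\spt T$ meets the vertex of $C$.

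\textbf{Step 1 (rotation fields are tangent to $T$).} Let $p$ be a regular point of $T$ with $p\neq 0$, and let $L=T_p\spt T$. Since $\spt T\subset C$ we have $L\subset T_pC$, and $L$ is Lagrangian because $T$ is special Lagrangian. By Lemma~\ref{basis}, $N_pC=\operatorname{span}\{JX_2,\dots,JX_n\}$, so the $\omega$-orthogonal complement of $T_pC$ is
\[
J N_pC=\operatorname{span}\{X_2,\dots,X_n\}.
\]
This space lies inside $T_pC$, so $T_pC$ is coisotropic. A Lagrangian subspace contained in a coisotropic subspace $W$ contains $W^{\omega}$. Indeed, if $v\in W^\omega$ then $\omega(v,L)=0$, hence $v\in L^\omega=L$. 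Therefore $X_2,\dots,X_n\in L$, and these span the tangent space of the $SO(n)$-orbit through $p$. So every rotation field is tangent to $\reg T$.

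\textbf{Step 2 (reduction to a curve in the orbit space).} Write points of $C\setminus\{0\}$ as $(s\,\omega,t\,\omega)$ with $\omega\in\mathbb{S}^{n-1}$ and $z=s+it\in\mathbb{C}$, identified modulo $z\mapsto -z$. By Step 1, near each regular point $T$ is locally $\{(s\omega,t\omega): s+it\in\gamma,\ \omega\in \mathbb{S}^{n-1}\}$ for some curve $\gamma$. The Harvey--Lawson computation then applies. Pulling $\Omega$ back to $\gamma\times\mathbb{S}^{n-1}$ gives $z^{n-1}\,dz\wedge d\mathrm{vol}_{\mathbb{S}^{n-1}}$. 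Hence calibration by $\re\Omega$ means $\im(z^{n-1}dz)=0$ along $\gamma$, that is, $\im(z^n)=c$ is constant on each arc of $\gamma$.

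\textbf{Step 3 (the singular set).} This is the step I expect to be the main obstacle. The tangency of rotation fields holds on $\reg T$, which is dense with $\mathcal{H}^n(\sing T)=0$. Consequently, for each rotation field $V$, the one-parameter group of rotations generated by $V$ pushes $T$ forward to a stationary cycle that agrees with $T$. This follows from the constancy theorem, or alternatively by differentiating $\langle T,\phi\circ\Phi_t\rangle$ and using $V\in T_x\spt T$ a.e. So $T$ is $SO(n)$-invariant as a current.

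Its quotient is then a one-dimensional integral current $\gamma$ in the half-plane, stationary for the weighted length $|z|^{n-1}ds$. Away from $0$, such one-dimensional objects consist of finitely many smooth arcs meeting at isolated points. On each arc $\im z^n$ is constant, and $\partial\gamma=0$ away from the origin forces the constants to agree across junctions. Connectedness of $\spt T$ then gives a single constant $c$.

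\textbf{Step 4 (using the asymptotics).} Being asymptotic to $P+Q$ means that $\gamma$ is asymptotic, with multiplicity one, to the rays $\arg z=0$ and $\arg z=\pi/n$, and that there are no other ends.

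If $c=0$, then $\gamma$ lies in $\{\im z^n=0\}$, which is a union of rays. Given these asymptotics and the cycle condition, $\gamma$ is exactly those two rays, so $T=P+Q$.

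If $c\neq0$, then $\gamma$ avoids the origin. Since $\gamma$ is a connected component of $\{\im z^n=c\}$ asymptotic to these two rays, the sign of $c$ is forced and the component lies in the sector $0<\arg z<\pi/n$. After scaling to $c=1$, this component is exactly the profile of $M$, so $T$ is the Lagrangian catenoid. The orientation is determined by phase $0$ together with the orientation of $P+Q$.
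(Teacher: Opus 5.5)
Your proposal is correct and follows essentially the same route as the paper: tangency of the rotation fields on $\reg T$, the evaluation of $\Omega(\cdot,X_2,\dots,X_n)$ showing that $\im z^n$ (the paper's $\varphi=\im(|x|+\sqrt{-1}|y|)^n$, up to sign) is locally constant, and then the dichotomy $\varphi\equiv 0$ (giving $P+Q$) versus $\varphi\equiv c\neq 0$ (giving the Lagrangian catenoid). The differences are matters of rigor rather than strategy: you make explicit the coisotropy argument that the paper only uses implicitly, and your Step 3 (global $SO(n)$-invariance via the flow, plus the structure of the one-dimensional quotient) replaces the paper's brief appeal to continuity of $\varphi$, which tacitly relies on the intended hypothesis $\sing T\subset\{0\}$ (the printed ``$\spt T\subset\{0\}$'' is a typo) together with connectedness.
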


\begin{proof}
    Consider $\varphi=\operatorname{Im}(|x|+\sqrt{-1}|y|)^n$. We prove $d\varphi=0$ using $\operatorname{Im}(\Omega)=0$. 
    On the cone
    \begin{align*}
        C=\{(x,y) \in \mathbb{R}^{2n} : |y|x=\pm |x|y \},
    \end{align*}
    let $x=r\omega$, $y=\rho\omega$, where $r\in \mathbb{R}$, $\rho\in\mathbb{R}$, $\omega \in S^{n-1}$.
    We have
    \begin{align*}
        z^j&=(r+\sqrt{-1}\rho)\omega^j, \\
        dz^j&=dx^j+\sqrt{-1}dy^j \\
            &=\omega^j (dr+\sqrt{-1}d\rho)+(r+\sqrt{-1}\rho)d\omega^j.
    \end{align*}
    On the cone $C$ using $\omega \in S^{n-1}$,     
    \begin{align*}
        \sum_{j=1}^n z^jdz^j &= \sum_{j=1}^n (\omega^j)^2(r+\sqrt{-1}\rho)(dr+\sqrt{-1}d\rho) +
        \sum_{j=1}^n(r+\sqrt{-1}\rho) \omega^jd\omega^j \\
        &=(r+\sqrt{-1}\rho)(dr+\sqrt{-1}d\rho).
    \end{align*}
   
    Let $p\in \spt T$ and without loss of generality assume that $x^1(p)\neq 0$. 
    Consider the rotation vector fields in Lemma\ref{basis}.
    \begin{align*}
        X_j&= x^{j}\frac{\partial}{\partial x^1} -x^{1}\frac{\partial}{\partial x^j} 
              + y^{j}\frac{\partial}{\partial y^1} -y^{1}\frac{\partial}{\partial y^j}.
    \end{align*}
    We prove $\im (dr+\sqrt{-1}d\rho)(r+\sqrt{-1}\rho)^{n-1}=0$ locally.
    Notice that $dz^{1}(X_j)=z^j$ and $dz^{i}(X_j)=-z^1\delta_{j}^i$ for all $2\le i,j \le n$.
    Therefore, 
    \begin{align*}
        (dz)(-,X_2,\dots, X_n)&=-(-z^1)^{n-2}\sum_{j=1}^n z^jdz^j \\
        &=-(-\omega^1)^{n-2}(r+\sqrt{-1}\rho)^{n-1}(dr+\sqrt{-1}d\rho)
    \end{align*}
    Taking the imaginary part, we have
    \begin{align*}
        \im \{(r+\sqrt{-1}\rho)^{n-1}(dr+\sqrt{-1}d\rho)\}|_{M}=0. 
    \end{align*}

    Thus, $\varphi$ is locally constant on $\reg T$. Since $\varphi$ is continuous, $\varphi$ is constant on $\spt T$. 
    If $0\in \operatorname{spt(T)}$, then $T$ satisfy $\varphi=0$, which implies $T$ is a cone, therefore, $T=P+Q$.
    
    If $0\notin \spt T$, $T$ is smooth and $\varphi$ is a non-zero constant. This implies that $T$ is the Lagrangian catenoid.
\end{proof}

\section{Proof of main theorems}
If $T$ is a cone, then $T$ is a translation of $P+Q$. We assume that $T$ is not a cone. 
By theorem \ref{reg}, theorem \ref{inf} and Allard-Almgren theorem(\cite{Allard-Almgren}, \cite{Simon84}), $T$ satisfies the following:
\begin{itemize}
    \item the Hausdorff dimension of the singular set is at most $n-3$,
    \item $T$ is regular at infinity,
    \item the support of $T$ is connected.
\end{itemize}

We introduce calibrations according to $n$. Applying theorem \ref{calibration}, 
we prove $T$ is calibrated and each graph satisfies $u=O(|x|^{1-n})$.

\subsection{Proof of Theorem \ref{main1} $(n=2)$}
In this case, $T$ is a smooth oriented embedded minimal submanifold in $\mathbb R^4$ that is regular at infinity.

In $\mathbb{R}^4$, the symmetric angle condition \eqref{eq:symmetric angle} becomes 
\[
    \theta_1=\theta_2=\frac{\pi}{2}   
\]
We identify $\mathbb{R}^4$ with $\mathbb{C}^2$.
Choosing coordinates, we may assume that 
Two oriented planes are
\[
    P=\{(z^1,z^2)\in \mathbb{C}^2:z^1=0\}, \quad Q=\{(z^1,z^2)\in \mathbb{C}^2:z^2=0\}.
\]
Thus, $P+Q$ is calibrated by the K\"ahler form $\omega$, in the standard coordinates $z^j=x^j+iy^j$, for $j=1, 2$, we have
\[
    \omega = \sum_{j=1}^2 dx^j \wedge dy^j.
\]

By theorem \ref{calibration}, $T$ is calibrated by $\omega$.
Consider a holomorphic function $\mu=z^1z^2$ on $T$. The asymptotic condition implies that $\mu$ is bounded. Since $T$ has a finite density at infinity, $T$ is parabolic, and hence $\mu$ is constant. Up to a scaling and a rotation,
\[
    T=\{(z^1,z^2)\in \mathbb{C}^2:z^1z^2=1\}.
\]

\subsection{Proof of Theorem \ref{main1} $(n\ge 3)$}
We may assume that 
\begin{align*}
    P=\{(x,y) \in \mathbb{R}^{2n} : y=0\}, \quad
    Q=\{(x,y) \in \mathbb{R}^{2n} : y=\tan(\pi/n)x\}.
\end{align*}
We identify $\mathbb{R}^{2n}$ with $\mathbb{C}^n$. Since $P+Q$ is calibrated by $\re\Omega$, by theorem \ref{calibration}, $T$ is calibrated by $\re\Omega$, i.e. $T$ is special Lagrangian of phase $0$. Since $T$ is special Lagrangian, we may assume that each graph has the decay   
\begin{align*}
    u(x)=c_3\sum_{i=1}^n x|x|^{-n}+O(|x|^{-n}).
\end{align*}

    We work with $P$ and assume that $P$ is the $x$-plane, 
    \begin{align*}
        P=\{(x,y)\in \mathbb{R}^{2n}: y=0\}.
    \end{align*}

    If $P$ is not the $x$-plane, $P=\{x=\lambda y\}$ for some $\lambda \in \mathbb{R}$ because $P$ is invariant under $SO(n)$-diagonal action. We may choose a special unitary change of coordinates $\tilde{x^i}, \tilde{y^i}$ such that 
    $P=\{\tilde{y}=0\}$ and
    \begin{align*}
        \mu=x^iy^j-x^jy^i=\tilde{x}^i\tilde{y}^j-\tilde{x}^j\tilde{y}^i.
    \end{align*}
    
    Therefore, the same argument works for $Q$. Each $\mu=x^iy^j-x^jy^i$ has the asymptotic expansion $\mu=O(r^{1-n})$ and $\nabla \mu=O(r^{-n})$.

    Since $T$ is special Lagrangian, $\mu=x^iy^j-x^jy^i$ is a harmonic function on the regular set of $T$. Since the singular set is at most $n-3$, the standard covering argument implies that $\mu$ is weakly harmonic on $T$, i.e.
\begin{align*}
    \int_{\text{spt($T$)}} \nabla_T\zeta \cdot \nabla_T \mu=0 \text{ for all } \zeta\in C_{c}^1(\mathbb{C}^{n}).
\end{align*}    

    Since $\mu=O(r^{1-n})$, we have $\nabla_T \mu=0$ on $\reg T$. Thus, each $\mu=0$ on $\reg T$, therefore, $\mu=0$ on $\spt T$.
    
    This implies that the regular set of $T$ is $SO(n)$-invariant. Since the singular set is at most $n-3$, both $\spt T$ and $\sing T$ are $SO(n)$-invariant. Thus, we have $\spt T=\emptyset$ or $\spt T=\{0\}$. By lemma \ref{lemma:sLag}, in the first case, we obtain the Lagrangian catenoid. In the second case, we have $T=P+Q$.

\subsection{Proof of Theorem \ref{main2}}
As in the previous case, we can show that $M$ special Lagrangian. Since $H^1(M;\mathbb{R})=0$, $M$ is exact. By \cite[Theorem~1.1]{IJO2016}, $M$ is a Lawlor neck.

\bibliographystyle{amsplain}
\bibliography{references}

\end{document}